\documentclass[11pt]{amsart}
\textheight 8.5in \textwidth 6.5in \evensidemargin .1in \oddsidemargin .1in \topmargin .25in \headsep .1in \headheight 0.2in \footskip .5in

\usepackage{graphicx,multirow}
\usepackage[mathcal]{euscript}
\usepackage{float}
\usepackage{amsmath,amsthm,amssymb,amsfonts,amscd,epsfig,latexsym,graphicx,textcomp}

\restylefloat{figure}

\newtheorem{theorem}{Theorem}[section]

\newtheorem{conjecture}[theorem]{Conjecture}

\theoremstyle{definition}

\theoremstyle{remark}
\newtheorem{remark}[theorem]{Remark}

\numberwithin{equation}{section}

\newcommand{\BR}{\mathbb{R}}










\begin{document}

\title[Cube Number Properties]{Cube number can detect chirality and Legendrian type of knots}

\author[B. McCarty]{Ben McCarty}

\address{Department of Mathematics, Louisiana State University \newline
\hspace*{.375in} Baton Rouge, LA 70817, USA} \email{\rm{benm@math.lsu.edu}}

\subjclass{}
\date{}

\begin{abstract}
For a knot $K$ the cube number is a knot invariant defined to be the smallest $n$ for which there is a cube diagram of size $n$ for $K$.  We will show that the cube number detects chirality in all cases computed thus far, and distinguishes certain legendrian knots.
\end{abstract}

\maketitle

\bigskip
\section{Introduction}
\bigskip

Cube diagrams are $3$-dimensional representations of knots or links (c.f. \cite{Adam}).  A cube diagram is a cubic lattice knot embedded in an $n \times n \times n$ cube in $\mathbb{R}^3$ where each projection to an axis plane ($x = 0$, $y = 0$, and $z = 0$) is a grid diagram.  The integer $n$ is the \emph{size} of the cube diagram and the \emph{cube number} of a knot, denoted $c(K)$, is the smallest $n$ for which there is a cube diagram for the knot of size $n$.  

In \cite{BaldMcCar} small examples of cube diagrams of knots were given up to $7$ crossings.  In some cases it was observed that the examples given were minimal.  In each of these examples minimality was guaranteed since the arc index, $\alpha(K)$, is less than or equal to the cube number and cube diagrams for $K$ can be found with cube number equal to $\alpha(K)$.  Cube number is a far more powerful invariant than arc index.  For example, $\alpha(K) = \alpha(mK)$ where $mK$ is the mirror image of $K$ for all knots and links, but: 

\bigskip
\noindent
{\bf Theorem 1} \emph{For eight of the first twelve chiral knots in Rolfsen's knot table (up to $7$-crossing knots), cube number detects chirality and no counter example is known to exist in the remaining four cases.  See Theorem \ref{thm:chirality} for the full list of knots where cube number is known to detect chirality.}
\bigskip

Let $K$ be a Legendrian knot.  Define the \emph{Legendrian cube number} (or cube number when the context is clear), $c_\ell(K)$, to be the minimum $n$ such that there is a cube diagram for $K$ of size $n$ that projects to a Legendrian front of $K$ (see details in Section \ref{section:legendrian}).  Perhaps surprisingly, the Legendrian cube number detects Legendrian knot type in some cases.  This fact is unexpected because there is no clear relationship as of yet between cube diagrams and Legendrian knots (cube diagrams do not naturally embed as Legendrian knots even when they have the same Legendrian knot projections).  In this paper we prove:

\bigskip
\noindent
{\bf Theorem 2} \emph{Let $K_{min}$ be the left hand $(5,2)$-torus knot with maximal Thurston-Bennequin number and $r = -3$ and $K_{max}$ the $(5,2)$-torus knot with maximal Thurston-Bennequin number and $r = 3$.  Then the Legendrian cube number distinguishes between $K_{min}$ and $K_{max}$.}
\bigskip

We conclude this paper with a conjecture that generalizes Theorem 2 to all $(p,2)$ torus knots and show that this conjecture is true if Conjecture \ref{gridMoves} is true.

\bigskip
\section{Definition of a cube diagram}

Let $n \in \mathbb{Z}^{+}$ and $\Gamma$ an $n \times n \times n$ cube, thought of as a $3$-dimensional Cartesian grid with integer-valued vertices.  A \textit{flat of $\Gamma$} is any cuboid (a right rectangular prism) with integer vertices in $\Gamma$ such that there are two orthogonal edges of length $n$ with the remaining orthogonal edge of length $1$.  A flat with an edge of length 1 that is parallel to the $x$-axis, $y$-axis, or $z$-axis is called an {\em $x$-flat}, {\em $y$-flat}, or {\em $z$-flat} respectively.  Note that the cube itself is canonically oriented by the standard orientation of $\BR^3$ (right hand orientation).

\begin{center}
\begin{figure}[h]
\centering
\includegraphics[scale=.3]{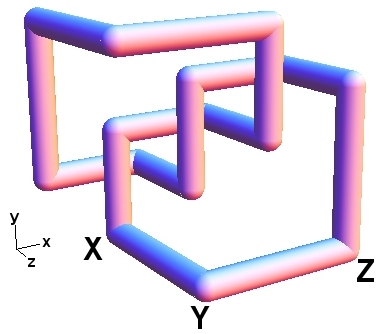}
\caption{Lefthand trefoil cube diagram.}
\label{fig:LHTrefoil}
\end{figure}
\end{center}

A \emph{marking} is a labeled half-integer point in $\Gamma$.  We mark unit cubes of $\Gamma$ with either an $X$, $Y$, or $Z$ such that the following {\em marking conditions} hold:
\begin{itemize}
    \item each flat has exactly one $X$, one $Y$, and one $Z$ marking;\\

    \item the markings in each flat form a right angle such that each segment is parallel to a coordinate axis;\\

    \item for each $x$-flat, $y$-flat, or $z$-flat, the marking that is the vertex of the right angle is an $X, Y,$ or $Z$ marking respectively.
\end{itemize}

We get an oriented link in $\Gamma$ by connecting pairs of markings with a line segment whenever two of their corresponding coordinates are the same.  Each line segment is oriented to go from an $X$ to a $Y$, from a $Y$ to a $Z$, or from a $Z$ to an $X$. The markings in each flat define two perpendicular segments of the link $L$ joined at a vertex, call the union of these segments a {\it cube bend}. If a cube bend is contained in an $x$-flat, we call it an {\it $x$-cube bend}. Similarly, define {\it $y$-cube bends} and {\it $z$-cube bends}.

\begin{center}
\begin{figure}[h]
\centering
\includegraphics[scale=.2]{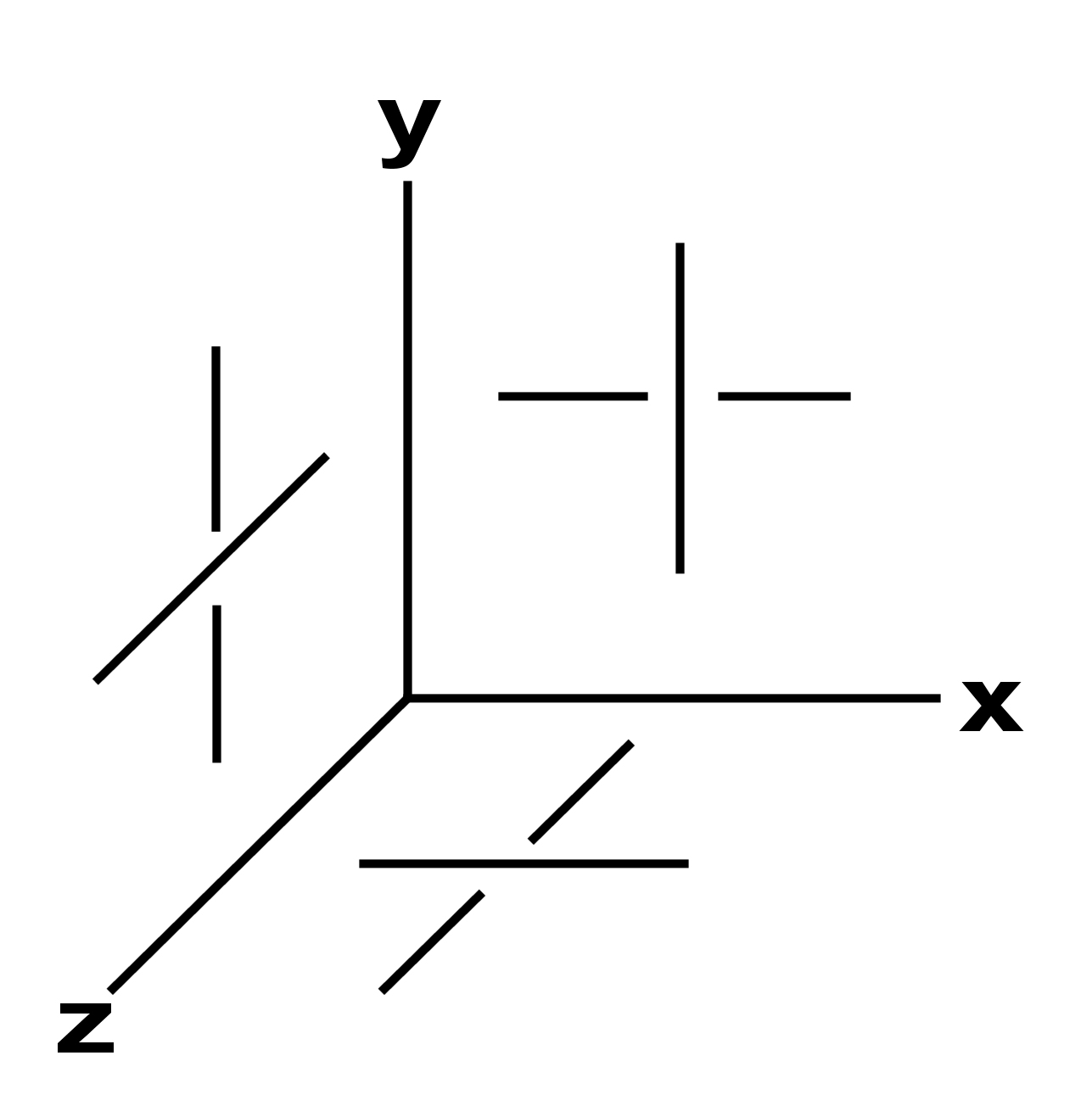}
\caption{Crossing conditions of the knot at every intersection in each projection.}
\label{fig:crossings}
\end{figure}
\end{center}

Arrange the markings in $\Gamma$ so that at every intersection point of the $(x,y)$-projection (i.e., $\pi_z : \mathbb{R}^3 \rightarrow \mathbb{R}^3$ given by $\pi_z(x,y,z)=(x,y)$), the segment parallel to the $x$-axis has smaller $z$-coordinate than the segment parallel to the $y$-axis.  Similarly, arrange so that in the $(y,z)$-projection, $z$-parallel segments cross over the $y$-parallel segments, and in the $(z,x)$-projection, the $x$-parallel segments cross over the $z$-parallel segments (see Figure \ref{fig:crossings}).

A set of markings in $\Gamma$ satisfying the marking conditions and crossing conditions is called a \emph{cube diagram} for the knot or link.

\bigskip
\section{Liftability of grid diagrams}

Because cube diagrams project to grid diagrams, it is natural to think of a cube diagram as a lift of a grid diagram corresponding to, say, the $(x,y)$-projection of the cube.  However, such lifts do not always exist (c.f. \cite{Adam} and \cite{BaldMcCar}).  

Before proceeding, we need to establish some terminology and facts about grid diagrams (for more details see \cite{Adam}).  Grid diagrams will be oriented so that vertical edges are directed from $X$ to $O$.  A \emph{bend} in a grid diagram, $G$, is a pair of segments that meet at a common $X$ or $O$ marking.  We will refer to the former pair of segments as an $X$-bend and the latter as an $O$-bend.  There are two ways to decompose any link component in $G$ into a set of non-overlapping bends, corresponding to a choice of $X$-bends or $O$-bends.  In particular, for a knot there are only two ways to decompose $G$ into non-overlapping bends, and such a choice will be called a \emph{bend decomposition}. 

Consider a grid diagram, $G$, together with a choice of a bend decomposition.  If possible we wish to lift $G$ to a cube diagram where $G$ is the $(x,y)$-projection of the cube diagram and the bend decomposition of $G$ determines the $z$-cube bends of the cube diagram.  While $G$ carries with it an orientation on the knot, so does the $(x,y)$-projection of the cube diagram.  In order that these orientations agree, the $X$-bend decompositon of $G$ has to be chosen--$O$-bends cannot be lifted to $z$-cube bends.  Furthermore, because of the symmetry between all three projections in a cube diagram, it is enough to work only with the $(x,y)$-projection and lift $X$-bends to $z$-cube bends.  

The crossings in a grid diagram sometimes generate a \emph{partial order} on the $X$-bends.  Let $b_1$ and $b_2$ be two $X$-bends.  If $b_1$ crosses over $b_2$ in $G$ we say that $b_1 > b_2$.  Thus in any lift of $G$, the $z$-cube bend corresponding to $b_1$ must have $z$-coordinate greater than that of the $z$-cube bend corresponding to $b_2$.  

Of course, not every grid diagram has a partial order on the $X$-bends.  A grid diagram for which there is no partial order on the $X$-bends may not even lift to a lattice knot that has well-defined knot projections in the other planes (Figure 5 of \cite{Adam}).  However, if there is a partial ordering on the $X$-bends of the grid diagram, it will lift to a lattice knot in which all projections are well-defined knot projections (c.f. \cite{Adam}).  Nevertheless, even a partial order doesn't guarantee liftability to a cube diagram as the $(y,z)$- and $(z,x)$-projections may not be grid diagrams in such a lift (c.f \cite{Adam} and \cite{BaldMcCar}).  Below, we will introduce some grid configurations that fail to lift, not because of a lack of partial ordering but due to crossings in the $(y,z)$- or $(z,x)$-projections that do not satisfy the crossing conditions for a cube diagram.  In Figures \ref{fig:type1} and \ref{fig:type2}, the shaded regions are determined by the corresponding $X$-bend and extend from the $X$-bend to the boundary of the grid diagram as indicated.  Furthermore, a dotted edge represents a sequence of edges in the grid that remains in the shaded region.  This condition is required in the proofs that follow to guarantee that the $(y,z)$- or $(z,x)$-projections contain crossings that do not satisfy the crossing conditions for a cube diagram.

\begin{theorem}
The Type 1 configurations shown in Figure \ref{fig:type1} do not appear in the projection of a cube diagram.
\end{theorem}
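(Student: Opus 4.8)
The plan is to argue by contradiction. Suppose a Type 1 configuration occurs as the $(x,y)$-projection of some cube diagram. Because any lift must use the $X$-bend decomposition (as established above), each $X$-bend of the configuration lifts to a horizontal $z$-cube bend sitting at a definite height, and the crossings of $G$ impose the partial order on these heights: if one $X$-bend crosses over another in $G$, its lift must have the larger $z$-coordinate. I would first fix notation for the distinguished $X$-bend $b$ of the configuration, its height $z_0$, the shaded region $R$ determined by $b$, and the dotted strand $\gamma$ which by hypothesis runs to the boundary of the grid while remaining inside $R$.

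The structural heart of the argument is that, since $\gamma$ stays within $R$, it must share a common row or column with one of the two legs of $b$; consequently, in the side-projection onto the plane normal to that shared direction, a segment of $\gamma$ crosses the $z$-parallel connector that joins $b$ to its neighboring bend through the intervening $O$-marking. I would then use the partial order to pin down the relevant heights: the over/under data at the crossings appearing inside the configuration forces $z_0$ to lie on a prescribed side of the heights of the strands of $\gamma$. The shaded region is set up precisely so that this forced ordering in the suppressed coordinate is the reverse of what the relevant crossing condition of Figure \ref{fig:crossings} demands — either the $x$-parallel segment is forced below the $z$-parallel one in the $(z,x)$-projection, or the $z$-parallel segment is forced below the $y$-parallel one in the $(y,z)$-projection. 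Either way, the crossing condition fails, contradicting the assumption that we have a cube diagram, and so the Type 1 configuration is ruled out.

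The main obstacle will be the bookkeeping that ties the planar over/under data to the ordering in the suppressed coordinate and then to a crossing in the side-projection. Specifically, I expect the delicate points to be: verifying that the dotted strand genuinely produces a crossing with the vertical connector of $b$ — this is exactly why the hypothesis that $\gamma$ remains in $R$ is indispensable, since a strand leaving $R$ could avoid the crossing — and correctly identifying which segments are $z$-parallel (the vertical connectors at the $O$-markings) versus $x$- or $y$-parallel (the legs of the bends), so that the condition shown to be violated is the one stated in Figure \ref{fig:crossings}. Once these identifications are made and the height ordering is read off from the partial order, the contradiction is immediate.
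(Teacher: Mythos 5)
Your overall strategy matches the paper's --- assume a lift exists, use the partial order induced by the crossings, and derive a violation of the crossing conditions of Figure \ref{fig:crossings} in the $(z,x)$- or $(y,z)$-projection --- and you correctly identify which two conditions end up violated. But the geometric mechanism at the heart of your argument is wrong, and it skips the step the paper's proof actually turns on. The paper first shows that the shaded region must contain an $O$-marking of the dotted strand: if it contained none, then $a$ and $b$ would have to end at the same $X$-mark, so there would be no partial order on the $X$-bends and hence no lift at all. Then, because $a$ lifts below the flat containing the distinguished $X$-bend while $b$ lifts above it, the dotted strand must change height through that flat, so at least one of those $O$-markings corresponds to a $z$-parallel edge passing through the flat. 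It is this vertical edge of $\gamma$ --- located, in the $(x,y)$-plane, inside the shaded region --- that crosses a horizontal leg of the bend in a side projection on the forbidden side, which violates the crossing condition.

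Your proposal reverses these roles: you have a horizontal segment of $\gamma$ crossing ``the $z$-parallel connector that joins $b$ to its neighboring bend,'' i.e.\ the bend's own vertical edge at its $O$-marking. That crossing is not forced by the hypotheses: nothing makes $\gamma$ pass through the row or column containing $b$'s $O$-markings, nor at a height inside that connector's $z$-span. Moreover, the inference you use to get it --- $\gamma$ stays within $R$, so it shares a row or column with a leg of $b$, hence a segment of $\gamma$ crosses the connector in a side projection --- is a non sequitur: a horizontal segment of $\gamma$ sharing a row or column with a horizontal leg of $b$ yields parallel segments in the side projections, not a crossing with a vertical edge. Relatedly, your claim that the partial order forces $z_0$ ``to lie on a prescribed side of the heights of the strands of $\gamma$'' misses the essential point: the strands of $\gamma$ lie on \emph{both} sides of $z_0$ (one end below the flat, the other above), and it is exactly this transition that forces a vertical edge of $\gamma$ through the flat. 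Without establishing the existence of an $O$-marking of $\gamma$ inside the shaded region and the height transition across the flat, no contradiction is obtained.
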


\begin{center}
\begin{figure}[h]
\centering
\includegraphics[scale=.35]{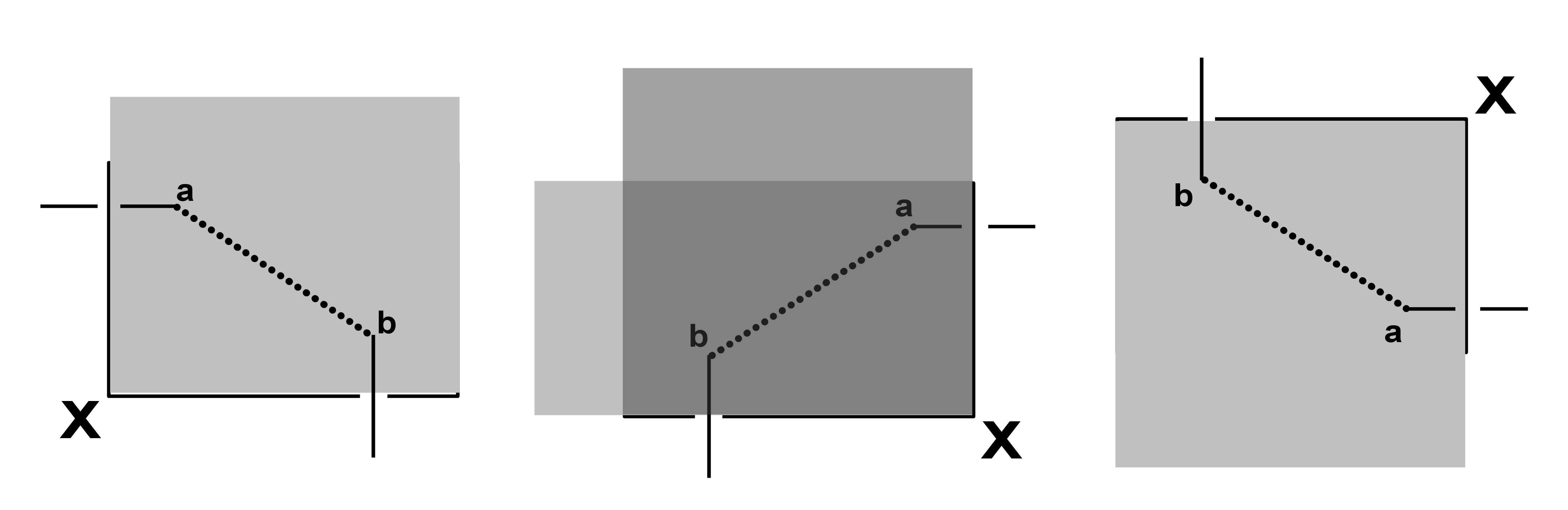}
\caption{Type 1 configurations.}
\label{fig:type1}
\end{figure}
\end{center}

\begin{proof}
We will prove the result for the center configuration.  The remaining cases are similar.  If we assume first that there is a partial order on the $X$-bends of the grid diagram, then the shaded region must contain an $O$ marking.  If there is no $O$ marking in the shaded region, then $a$ and $b$ must end at the same $X$ mark, and thus there is no partial order on the $X$-bends, and no lift.  In any lift of the grid to a lattice knot satisfying the marking conditions for a cube diagram at least one such $O$ marking must represent a vertical edge that passes through the flat containing the $X$-bend shown, since $a$ is below the bend and $b$ is above.  Since this $z$-parallel edge is located in the shaded region, it must either cross over the $x$-parallel edge shown in the $(z,x)$-projection or behind the $y$-parallel edge shown in the $(y,z)$-projection, which breaks the crossing condition shown in Figure \ref{fig:crossings}.
\end{proof}

\begin{center}
\begin{figure}[h]
\centering
\includegraphics[scale=.35]{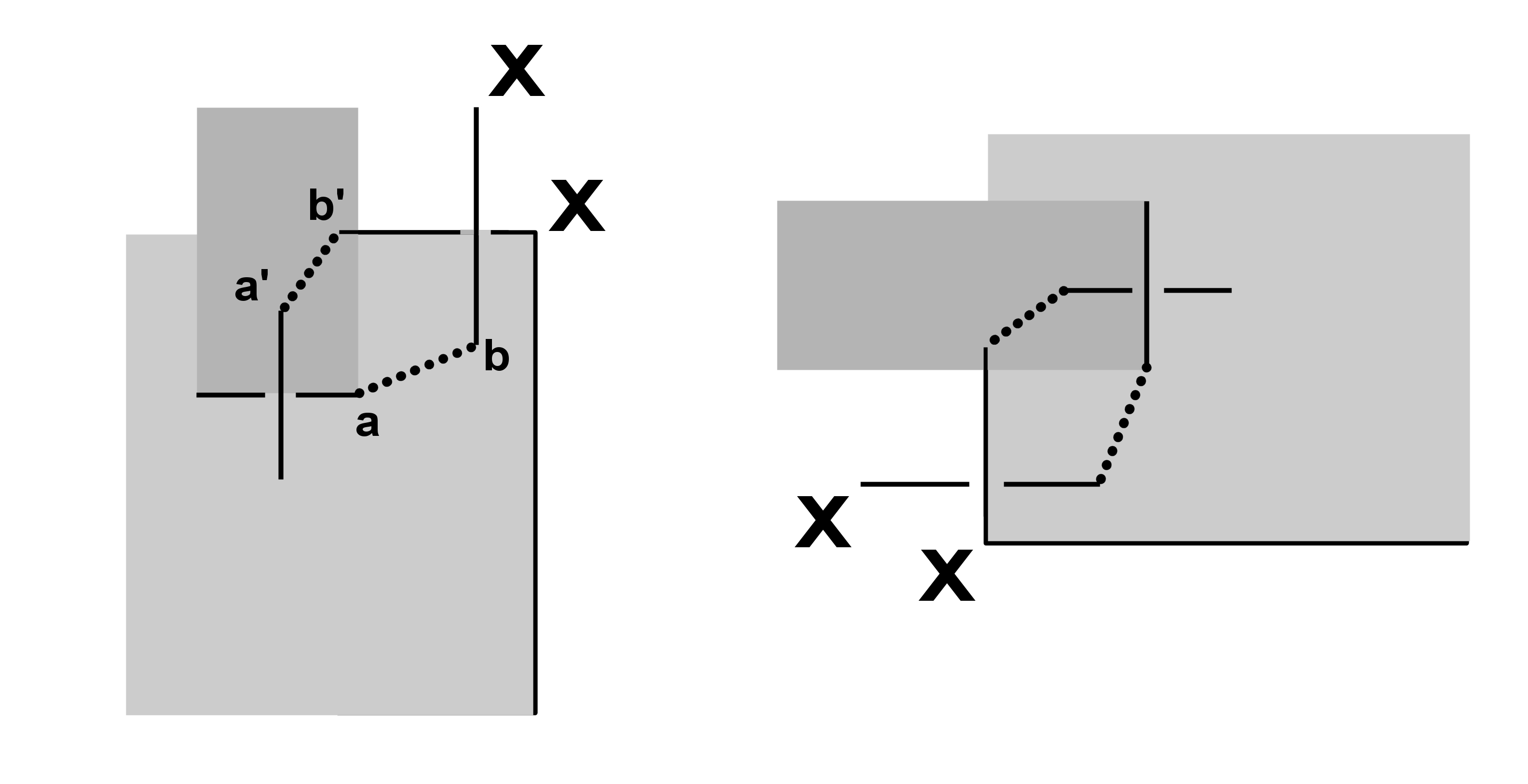}
\caption{Type 2 configurations.}
\label{fig:type2}
\end{figure}
\end{center}

\begin{theorem}
The Type 2 configurations shown in Figure \ref{fig:type2} do not appear in the projection of a cube diagram.
\end{theorem}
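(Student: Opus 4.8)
The plan is to follow the same strategy used for the Type~1 configurations, adapting the geometric bookkeeping to the arrangement in Figure~\ref{fig:type2}. As there, I would prove the result for one representative configuration and observe that the remaining cases follow by symmetric arguments. The first reduction disposes of the case in which the $X$-bends admit no partial order: in that situation there is no lift to a lattice knot with well-defined projections, so there is nothing to prove. Hence throughout I may assume that a partial order on the $X$-bends exists.

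Granting the partial order, the next step is to examine the shaded region determined by the displayed $X$-bend and argue that the marking conditions force a vertical ($z$-parallel) edge to pass through the flat containing that $X$-bend. As in the Type~1 proof, the key point is that the two arcs entering the shaded region sit on opposite sides of the $X$-bend---one below and one above---so some $O$ marking lying in the shaded region must be the endpoint of a vertical edge crossing the relevant flat. The dotted-edge hypothesis, that each dotted segment remains inside the shaded region, is precisely what guarantees that the offending vertical edge genuinely lies where it will produce a crossing, rather than escaping to the boundary of the grid.

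With the forced $z$-parallel edge in hand, the final step is to pass to the $(y,z)$- and $(z,x)$-projections and check that this edge necessarily crosses the $x$-parallel or $y$-parallel edge of the $X$-bend on the wrong side. By the crossing conditions of Figure~\ref{fig:crossings}, in the $(z,x)$-projection an $x$-parallel edge must cross \emph{over} a $z$-parallel edge, while in the $(y,z)$-projection a $z$-parallel edge must cross \emph{over} a $y$-parallel edge; the Type~2 geometry forces exactly the opposite relationship in at least one of these projections, contradicting liftability to a cube diagram.

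The main obstacle I anticipate is the orientation and over/under bookkeeping specific to the Type~2 geometry. Unlike Type~1, the Type~2 configurations differ in how the incoming arcs and the shaded region are positioned, so for each sub-case in the figure one must verify that the forced vertical edge really does land on the violating side of the relevant crossing, and identify which of the $(y,z)$- or $(z,x)$-projections is the one that fails. Getting these signs right---and confirming that the dotted-edge hypothesis prevents the edge from sneaking around the $X$-bend to the boundary---is where the real work lies; once the correct projection is isolated, the contradiction with Figure~\ref{fig:crossings} is immediate.
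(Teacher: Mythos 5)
Your proposal ports the Type~1 mechanism to Type~2, but the central step you rely on is exactly what fails for the Type~2 configurations. You claim that ``the two arcs entering the shaded region sit on opposite sides of the $X$-bend---one below and one above,'' so that some $O$ marking in the shaded region is forced to represent a vertical edge passing through the flat of that bend. In Type~1 this is forced by the crossings together with the partial order; in Type~2 it is not. The relative height of the relevant edge is left undetermined by the grid, so a lift is free to place it either above or below the displayed $X$-bend, and no single $O$-marking is unconditionally forced to produce a bad crossing with that bend's flat. An argument that tries to force one fixed violating edge, as yours does, can be evaded by the lift.

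The paper's proof instead runs a dichotomy on this unforced height. For the first configuration: if the edge ending in $a$ has $z$-coordinate \emph{below} the lower $X$-bend, then the sequence of edges joining $a$ to $b$ must contain an $O$-marking, and the corresponding $z$-parallel edge crosses \emph{behind} that lower $X$-bend in the $(y,z)$-projection, violating the crossing conditions. If instead that edge lies \emph{above} the lower $X$-bend, then it is the \emph{other} pair of arcs---the sequence joining $a'$ to $b'$---that must contain an $O$-marking, and now the violation is a $z$-parallel edge crossing \emph{above} the edge ending in $a$ in the $(z,x)$-projection. Note two features your sketch misses: the two cases involve different arc pairs ($a$--$b$ versus $a'$--$b'$), and in the second case the illegal crossing is with an ordinary edge of the diagram, not with the flat of the $X$-bend. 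Your closing paragraph gestures at sub-case bookkeeping, but the cases you anticipate are the different configurations in the figure, not this internal above/below split within a single configuration, which is the actual missing idea.
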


\begin{proof}
For the first configuration shown in Figure \ref{fig:type2}, if the edge ending in $a$ has $z$-coordinate less than the lower $X$-bend then the sequence of edges connecting $a$ to $b$ must contain an $O$-marking.  That $O$-marking corresponds to a $z$-parallel edge that crosses behind that lower $X$-bend in the $(y,z)$-projection.  Otherwise, the edge ending in $a$ is above that lower $X$-bend.  In this case, the sequence of edges connecting $a'$ to $b'$ must contain an $O$-marking that corresponds to a $z$-parallel edge that crosses above the edge ending in $a$ in the $(z,x)$-projection.  A similar argument will show that the other configuration fails to lift as well.
\end{proof}

\section{Cube number and chirality}

\begin{theorem}
\label{thm:chirality}
The cube number detects the chirality of the trefoil.  That is, if $K_L$ is the left hand trefoil and $K_R$ is the right hand trefoil, then $c(K_L) < c(K_R)$.
\end{theorem}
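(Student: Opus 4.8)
The plan is to pin down $c(K_L)$ and $c(K_R)$ separately and compare. Since $\alpha(K)=5$ for either trefoil (recall $\alpha(K)=\alpha(mK)$) and cube number is bounded below by arc index, we have $c(K_L),c(K_R)\geq 5$. To see that the left hand trefoil achieves this bound, I would exhibit the size-$5$ cube diagram of Figure \ref{fig:LHTrefoil} and check directly that its markings satisfy the marking and crossing conditions of Section 2; this gives $c(K_L)=5$. It then remains to prove the strict inequality by showing $c(K_R)\geq 6$, that is, that the right hand trefoil admits no cube diagram of size $5$.

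To establish this non-existence I would argue that any size-$5$ cube diagram of $K_R$ must be a lift of a minimal grid diagram, and then rule out every such lift using the Type $1$ and Type $2$ results above. Concretely, a size-$5$ cube diagram projects under $\pi_z$ to a grid diagram $G$ of $K_R$ of size $5$ whose $X$-bend decomposition is realized by the $z$-cube bends; by the three-fold symmetry of cube diagrams it is enough to obstruct this single projection. Because $5$ is the arc index, $G$ is forced to be minimal, and the minimal grid diagrams of the right hand trefoil form a short finite list once one enumerates the size-$5$ grids realizing $K_R$ up to cyclic translation of the grid (the ambient symmetry of the cube that preserves liftability). This reduces the theorem to a finite check over that list of representatives.

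For the finite check I would show that each minimal grid diagram of $K_R$, equipped with its $X$-bend decomposition, contains one of the forbidden Type $1$ or Type $2$ configurations of Figures \ref{fig:type1} and \ref{fig:type2}. By the two theorems above, the presence of such a configuration forces a crossing in the $(y,z)$- or $(z,x)$-projection that violates the crossing conditions, so the diagram cannot lift, and hence no size-$5$ cube diagram of $K_R$ exists and $c(K_R)\geq 6$. Combined with $c(K_L)=5$ this yields $c(K_L)<c(K_R)$. The mechanism driving the asymmetry is that the lifting rule sends $X$-bends to $z$-cube bends with a fixed handedness, so the mirror images $K_L$ and $K_R$ are treated differently: the left hand grid avoids the forbidden patterns while every right hand grid is forced to contain one.

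The main obstacle I anticipate is completeness of the enumeration: one must be certain that every minimal grid diagram of $K_R$, not merely a convenient representative, is accounted for, and that the symmetries used to trim the list genuinely preserve whether a forbidden configuration occurs. Care is also needed to verify that the hypothesis of the Type $1$ and Type $2$ theorems, namely that the dotted sequence of edges remains inside the shaded region, is actually satisfied by the bends arising in these particular grids, since that hypothesis is exactly what guarantees the offending crossing. Once the small list of minimal right hand trefoil grids is fixed, locating a forbidden configuration in each should be a routine case-by-case inspection.
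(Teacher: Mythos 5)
Your proposal takes essentially the same route as the paper: establish $c(K_L)=\alpha(K_L)=5$ by the explicit cube diagram of Figure \ref{fig:LHTrefoil}, then enumerate all minimal size-$5$ grid diagrams of the right hand trefoil and rule out each one as a possible $(x,y)$-projection of a cube diagram, giving $c(K_R)>5$. The one correction to your finite check is that the paper does not find a Type 1 or Type 2 configuration in every minimal grid: for the diagrams in columns 2 and 3 of Figure \ref{fig:RHTrefoils} the obstruction is either a Type 2 configuration or the absence of a partial order on the $X$-bends, so your case analysis must also invoke that more basic non-liftability criterion from Section 3 rather than relying on the forbidden configurations alone.
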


\begin{center}
\begin{figure}[h]
\centering
\includegraphics[scale=1.8]{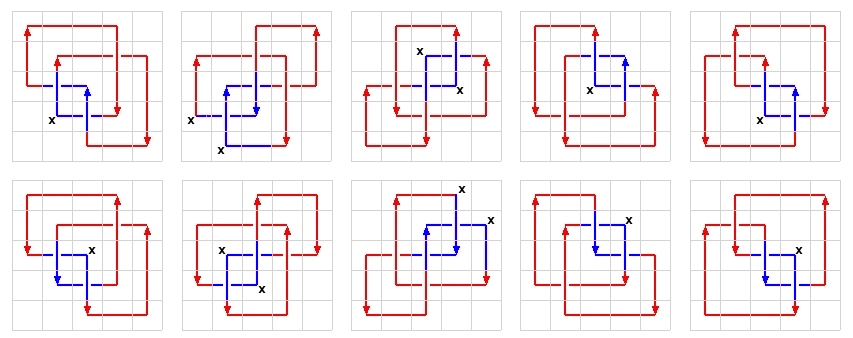}
\caption{Minimal right hand trefoil grid diagrams.}
\label{fig:RHTrefoils}
\end{figure}
\end{center}

\begin{remark}
The cube number of the left hand trefoil is 5 and the cube number of the right hand trefoil is 7.
\end{remark}

\begin{proof}
Note first that $c(K_L) = \alpha(K_L)$ (see Figure \ref{fig:LHTrefoil}).  For the right hand trefoil, Figure \ref{fig:RHTrefoils} shows all minimal grid diagrams.  For columns $1$, $4$, and $5$ there is a Type $1$ configuration present in the grid (shown in blue and marked by the $X$-bend).  For columns $2$ and $3$ there is either no partial order on the bends or there is a Type $2$ configuration (shown in blue and marked by the $X$-bend).  
\end{proof}

For all examples that have been computed, the cube number detects the chirality of the knot as in Theorem \ref{thm:chirality}.  For knots with arc index greater than $5$ proofs of this nature become infeasible.  However, a computer can do the same basic checks for Type 1 and Type 2 configurations.  A program was written that generates all grid diagrams, sifts out those that contain Type 1 and 2 configurations, and then attempts to lift the remaining diagrams to cube diagrams.  Upon finding a valid cube diagram, the Jones polynomial is computed to identify the knot type.  This program has been successfully run up to size $9$ diagrams, generating the following result:

\begin{theorem} 
\label{thm:mainchirality}
The cube number detects the chirality of the following knots: $5_2$, $7_2$, $7_3$, $7_4$, $7_5$, $8_{19}$, $9_{49}$, $10_{124}$, $10_{128}$, $10_{139}$, $10_{145}$, $10_{161}$, $12_{n591}$,  and the $(3,2)$, $(5,2)$, and $(7,2)$ torus knots.
\end{theorem}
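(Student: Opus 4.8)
The plan is to handle this theorem exactly as the trefoil case (Theorem~\ref{thm:chirality}) was handled, but to replace the by-hand inspection of minimal grid diagrams with an exhaustive computer search so that the argument reaches knots of arc index greater than five. For a chiral knot $K$ with mirror $mK$, detecting chirality means establishing $c(K)\neq c(mK)$. Since $\alpha(K)=\alpha(mK)$ and $\alpha\le c$, the strategy is to show that one of the two mirror images realizes its cube number at the common arc index while the other strictly exceeds it. For each knot in the list I would first produce an explicit cube diagram of size $\alpha(K)$ for one chirality, giving $c(K)=\alpha(K)$; this upper bound is the easy half and is entirely analogous to Figure~\ref{fig:LHTrefoil} for the left-hand trefoil. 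All of the work goes into the matching lower bound $c(mK)>\alpha(mK)$ for the other chirality.

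To prove that lower bound I would verify that no cube diagram of size $\alpha(mK)$ represents $mK$, by searching over every grid diagram of that size, since any such cube diagram must project to one. The search proceeds in three stages. First, enumerate all grid diagrams of the given size, working modulo the symmetries of the grid to keep the count manageable. Second, discard every diagram on which the $X$-bends carry no partial order, along with every diagram containing a Type~1 or a Type~2 configuration (Figures~\ref{fig:type1} and~\ref{fig:type2}); by the two non-liftability theorems above, none of these is the projection of a cube diagram. Third, for each surviving diagram attempt to build a lift that respects the partial order on the $X$-bends, and test whether the resulting lattice knot meets the crossing conditions of Figure~\ref{fig:crossings} in all three projections. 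If no survivor yields a valid cube diagram, then $c(mK)>\alpha(mK)=\alpha(K)=c(K)$, so chirality is detected. The knot type of each successful lift is confirmed by its Jones polynomial, which separates each listed knot both from its mirror and from the other knots of the same size arising in the search.

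The main obstacle is sheer computational feasibility: the number of grid diagrams of size $n$ grows super-exponentially, so even after quotienting by symmetry the enumeration is only tractable through size nine, and this is exactly what bounds the range of knots in the statement. A further subtlety is that the Type~1 and Type~2 obstructions are necessary but not sufficient for non-liftability---a diagram free of both may still fail to lift for other reasons---so the constructive third stage cannot be dropped and must be implemented with care. Finally, the conclusion rests on correctly reading off the knot type of each lift; the Jones polynomial suffices here because every knot in the list is distinguished from its mirror by the substitution $q\mapsto q^{-1}$, so no finer invariant is needed once the two cube numbers are known to differ.
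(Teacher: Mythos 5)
Your proposal follows essentially the same route as the paper: the paper's argument is exactly a computer search that generates all grid diagrams up to size $9$, filters out those with Type 1 or Type 2 configurations (or no partial order on the $X$-bends), attempts to lift the survivors to cube diagrams checking the crossing conditions, and identifies knot types of successful lifts via the Jones polynomial, while the easy chirality gets cube number equal to arc index from an explicit construction as in Figure \ref{fig:LHTrefoil}. Your observations about the necessity of the constructive lifting stage and the size-$9$ feasibility bound match the paper's own account, so there is nothing methodologically different to flag.
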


For most knots mentioned in Theorem \ref{thm:mainchirality} one knot has cube number equal to arc index and all that is known is that the mirror image has cube number strictly greater than arc index.  However, the calculation has also yielded the result that the cube number for the right hand trefoil is equal to $7$ and the cube number for the right hand $(5,2)$ torus knot is $10$ (the program found no diagrams for the right hand $(5,2)$ torus knot of size $9$ or less and an example of size $10$ exists).

\section{Cube number and Legendrian type}
\label{section:legendrian}

Any grid diagram represents the front projection of a Legendrian knot by following this procedure.  First smooth the northeast and southwest corners.  Then convert northwest and southeast corners to cusps and rotate the grid diagram $45$ degrees counterclockwise.  Note that some authors (see \cite{legend}) describe a similar procedure to obtain a Legendrian projection for the mirror image of the knot represented by $G$, while the one described here is for the original knot type represented by $G$.  While there is no similar construction to convert a cube diagram into a Legendrian knot, each of the projections of a cube diagram is a grid diagram, and hence, represents the Legendrian front projection of some knot.  Therefore one can define the Legendrian cube number, $c_\ell(K)$, to be the smallest $n$ such that there is a cube diagram for the knot $K$ of size $n$ where the $(x,y)$-projection of the cube diagram is a grid diagram representing the Legendrian knot $K$.  

According to \cite{etnyrehonda} the \emph{rotation number} may be computed from the Legendrian front projection as follows:
$$r(K) = \frac{1}{2}(D_c - U_c)$$
where $D_c$ is the number of downward oriented cusps and $U_c$ is the number of upward oriented cusps in the Legendrian front projection.  Furthermore, according to \cite{etnyrehonda} and \cite{Ng}, any minimal grid diagram for a left hand Legendrian torus knot, $T_{p,2}$, must realize the maximal Thurston Bennequin number and hence have rotation number satisfying:
$$r(K) \in \left\{ \pm (|p| - 2 - 4t): t \in \mathbb{Z}, 0 \leq t < \frac{|p| - 2}{2} \right\}.$$
For the following results we will need to refer to a \emph{standard diagram}, $G_{j,k}$, for a $(p,2)$-torus knot.  A standard diagram for $T_{p,2}$ is one that contains all crossings along two diagonals as shown in Figure \ref{fig:standardp2} (note that there is a similar notion of standard diagram for right hand $(p,2)$-torus knots).  For the standard diagram shown in Figure \ref{fig:standardp2} the rotation number may be computed as follows:  
$$r(G_{j,k}) = \frac{1}{2} ( 2k - 2j ) = k - j.$$
Note that each Legendrian left hand $(p,2)$-torus knot realizable in a minimal grid diagram is represented by a standard grid diagram.  Also the standard diagram with maximum rotation number is one in which $k = p - 1$ (recall that $p = k + j$).  Similarly the standard diagram with minimum rotation number has $j = p - 1$.

\begin{center}
\begin{figure}[h]
\centering
\includegraphics[scale=.5]{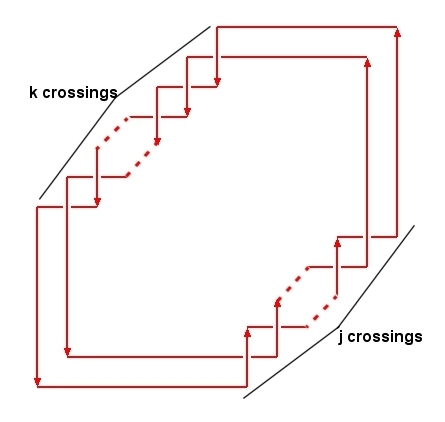}
\caption{Standard diagram $G_{j,k}$ where $j+k = p$ and $j,k \geq 1$.}
\label{fig:standardp2}
\end{figure}
\end{center}

\begin{theorem}
\label{thm:legendrian}
Let $K_{min}$ be the left hand $(5,2)$-torus knot with maximal Thurston-Bennequin number and $r = -3$ and $K_{max}$ the $(5,2)$-torus knot with maximal Thurston-Bennequin number and $r = 3$.  Then the Legendrian cube number distinguishes between $K_{min}$ and $K_{max}$.
\end{theorem}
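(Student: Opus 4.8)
The plan is to pin down the Legendrian cube number of each knot by identifying its size-$7$ grid representatives with the standard diagrams of Figure~\ref{fig:standardp2} and testing those diagrams for liftability via the theorems on Type~1 and Type~2 configurations (Figures~\ref{fig:type1} and~\ref{fig:type2}). The arc index of the $(5,2)$-torus knot is $7$, and since $K_{min}$ and $K_{max}$ both realize the maximal Thurston--Bennequin number, each is representable by a minimal grid diagram of size~$7$. Using $r(G_{j,k}) = k-j$ with $j+k=5$, the diagram for $K_{max}$ (where $r=3$) is $G_{1,4}$ and the diagram for $K_{min}$ (where $r=-3$) is $G_{4,1}$. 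Because every minimal Legendrian grid diagram for these knots is a standard diagram, it suffices to show that exactly one of $G_{1,4}$ and $G_{4,1}$ lifts to a size-$7$ cube diagram; this separates $c_\ell(K_{min})$ from $c_\ell(K_{max})$.

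First I would inspect $G_{1,4}$ and $G_{4,1}$ directly. Fixing the $X$-bend decomposition (the only one compatible with the orientation, as explained above in the discussion of liftability), the five crossings of the standard diagram are split across the two diagonals as $1$ and $4$. A short check shows that this asymmetric distribution makes exactly one of the two diagrams contain a Type~1 or Type~2 configuration, while the other admits a partial order on its $X$-bends with no such configuration. Call these the \emph{obstructed} and \emph{unobstructed} diagrams respectively.

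For the unobstructed diagram I would exhibit an explicit size-$7$ cube diagram: order its $X$-bends by the partial order coming from the crossings, assign $z$-coordinates realizing that order, and verify the marking conditions and the crossing conditions of Figure~\ref{fig:crossings} in all three projections. Since its $(x,y)$-projection is the chosen standard diagram and $7$ is the arc index, the corresponding knot has Legendrian cube number exactly $7$. For the obstructed diagram, the theorems on Type~1 and Type~2 configurations show that it does not lift; combined with the fact that all size-$7$ grid diagrams for this knot are standard diagrams, no minimal cube diagram represents it, so its Legendrian cube number is at least $8$. Hence $c_\ell(K_{min}) \neq c_\ell(K_{max})$.

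The main obstacle is the lower bound: I must rule out every size-$7$ cube diagram for the obstructed knot, not merely the one canonical standard diagram. This requires the input that the minimal grid diagrams for a maximal Thurston--Bennequin left-hand $(5,2)$-torus knot of the prescribed rotation number are exactly the standard diagrams, taken up to the grid symmetries (cyclic row and column shifts) preserving the Legendrian type, together with the verification that the Type~1/Type~2 obstruction survives under all such symmetries. Establishing the asymmetry between $G_{1,4}$ and $G_{4,1}$ — that precisely one of them is obstructed — is the crux of the argument; the explicit lift and the cusp bookkeeping behind the rotation numbers are routine by comparison.
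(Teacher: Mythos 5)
Your proposal is correct and follows essentially the same route as the paper: the paper obtains $c_\ell(K_{max}) = \alpha(T_{5,2})$ by extending the explicit left-hand trefoil cube diagram construction, and obtains $c_\ell(K_{min}) > \alpha(T_{5,2})$ by checking that the $49$ minimal grid diagrams for $K_{min}$ are exactly the cyclic permutations of the standard diagram $G_{4,1}$ and that each retains a Type 1 configuration. The input you flag as the crux --- the enumeration of minimal diagrams up to cyclic permutation together with the survival of the Type 1/Type 2 obstruction under those moves --- is precisely what the paper supplies by computer verification.
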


\begin{proof}
Note first that $c_\ell(K_{max}) = \alpha(T_{5,2})$ (the construction shown in Figure \ref{fig:LHTrefoil} extends to $T_{5,2}$).  Now, for $K_{min}$ all minimal grid diagrams can be reached from the standard one (see Figure \ref{fig:lhp2}) by cyclic permutation.  This fact has been verified by a computer program that determines the total number of grid diagrams of a given Legendrian knot.  In this case the program confirmed that there are only $49$ distinct minimal diagrams for $K_{min}$, precisely those that may be reached from the standard diagram by cyclic permutation.  Finally, cyclic permutations do not eliminate all of the Type 1 configurations present in each of the $49$ diagrams.  Therefore $c_\ell(K_{min}) > \alpha(T_{5,2})$.
\end{proof}

There are also further results regarding cube number and Legendrian knot type.  A computer program has been used to show that cube number distinguishes between the Legendrian $(7,2)$ torus knots with $tb = 0$ and $r = \pm 5$.  The following conjecture has been verified up to $p = 7$ by a computer program.  

\begin{conjecture}
\label{gridMoves}
Given a torus knot $T_{p,2}$ and a legendrian knot $K_L$ of topological type $T_{p,2}$ and a grid diagram for $K_L$ of size $\alpha(T_{p,2})$.  Then there is a sequence of commutation and cyclic permutation moves taking $K_L$ to the standard grid diagram representing it.
\end{conjecture}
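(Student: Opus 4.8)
The plan is to treat this as a connectivity statement about the finite set of size-$(p+2)$ grid diagrams of a fixed Legendrian $T_{p,2}$, exploiting the special ``double staircase'' structure forced on such minimal diagrams. First I would record the invariance: both commutation and cyclic permutation preserve the grid size and the Legendrian type, hence $tb$ and $r$. Since $\alpha(T_{p,2}) = p+2$ and any grid of that size realizes the maximal Thurston--Bennequin number (the cited results of Ng and Etnyre--Honda), every diagram in question realizes $tb_{\max}$ and has rotation number $r = k-j$ for the target $G_{j,k}$ of Figure \ref{fig:standardp2}. Thus it suffices to show that each such minimal grid is carried to $G_{j,k}$ by the two moves; equivalently, that the minimal grids of this fixed Legendrian knot form a single orbit. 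I would encode each grid by the two permutations recording the columns of its $X$- and $O$-markings together with the cyclic sequence of its bends, so that commutations become transpositions of adjacent non-interleaving columns and cyclic permutation becomes a rotation of the sequence.

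The heart of the argument is a rigidity statement: a minimal grid of $T_{p,2}$ must be a double staircase. Because $T_{p,2}$ is an alternating $2$-bridge knot of crossing number $p$, its arc index is exactly $p+2$ (the Bae--Park formula $\alpha(K)=c(K)+2$ for alternating knots), and a minimal grid is correspondingly tight: there are precisely two columns carrying the long edges that join the two strands of the $2$-braid $\sigma_1^{p}$, while each of the remaining $p$ columns contributes exactly one crossing. Minimality forbids any wasted column, so I would argue by a counting and exchange analysis that the $p$ crossing-columns split into two monotone diagonal runs of lengths $j$ and $k$ with $j+k = p$, exactly as in $G_{j,k}$; any other configuration either increases the grid size or yields a diagram of a different knot. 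The rotation-number computation $r = k-j$ of the excerpt then pins down the pair $(j,k)$ from the Legendrian type of $K_L$.

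Granting the double-staircase structure, the remaining normalization is a sorting argument. After a cyclic permutation placing the two long-edge columns in the standard position, I would run a bubble sort on each diagonal: whenever two adjacent columns of the same run appear out of monotone order, the intervals spanned by their markings are nested or disjoint, never interleaved, so a commutation transposes them. Iterating makes both runs monotone, and a final cyclic permutation registers them against $G_{j,k}$. Each step is a legal commutation or cyclic permutation, so the composite is the required sequence of moves.

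The main obstacle is the rigidity step of the second paragraph -- ruling out exotic minimal grids of $T_{p,2}$ lying outside the double-staircase family reachable by these moves. This is genuinely delicate, since the general question of whether all minimal grids of a knot are connected by commutation and cyclic permutation alone, without the grid-size-changing stabilizations, is open. To establish it for $T_{p,2}$ I would induct on $p$ within the family of $(p,2)$-torus links, so that a single destabilization lowers $p$ by one: pass by a proof-internal destabilization to the minimal grids of $T_{p-1,2}$, apply the inductive classification there, and check that standard stabilization recovers precisely the minimal grids of $T_{p,2}$ up to commutation and cyclic permutation. The subtle point is that destabilization is not injective, so the induction closes only once one verifies that \emph{every} minimal grid of $T_{p,2}$ arises, up to the allowed moves, as a standard stabilization of a minimal grid of $T_{p-1,2}$; it is exactly this surjectivity that the computer confirms for $p \le 7$.
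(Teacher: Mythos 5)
The paper does not actually prove this statement: it is recorded as Conjecture \ref{gridMoves} and is only ``verified up to $p = 7$ by a computer program,'' so the question is whether your argument closes the conjecture on its own. It does not, and you concede the point yourself: everything rests on the rigidity step---that every minimal grid of $T_{p,2}$ lies in the double-staircase family up to commutation and cyclic permutation---and your proposed induction defers its crucial surjectivity clause to ``exactly\ldots what the computer confirms for $p \le 7$.'' That is a reformulation of the conjecture in inductive form, with the inductive step exactly as open as the original statement; the sorting argument of your third paragraph is the easy part and only applies once rigidity is granted. As written, your proposal leaves the conjecture in precisely the status the paper already gives it.

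Moreover, the inductive mechanism is broken as described. Grid stabilization and destabilization are Cromwell moves: they preserve the underlying link type, so no destabilization, ``proof-internal'' or otherwise, can carry a grid diagram of $T_{p,2}$ to one of $T_{p-1,2}$---these are different links. What you actually need is a crossing-removing operation ($\sigma_1^{p} \mapsto \sigma_1^{p-1}$ in the braid picture), which is not a grid move and whose compatibility with minimality, Legendrian type, commutation, and cyclic permutation would have to be developed from scratch. In addition, for odd $p$ (the knot case of the conjecture) $T_{p-1,2}$ is a two-component link, so the induction exits the class of knots and the inputs you invoke (Ng, Etnyre--Honda, the rotation-number bookkeeping $r = k - j$) no longer apply as stated; and even among genuine grid (de)stabilizations only certain types preserve Legendrian type, a distinction your outline never makes. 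So the two steps that would constitute a proof---the double-staircase rigidity claim and the induction meant to deliver it---are respectively missing and invalid.
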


To understand the conjecture, we must first define what is meant by commutation and cyclic permutation.  A cyclic permutation of a grid diagram takes a column (resp. row) adjacent to the boundary of the grid, and moves it to the opposite end of the grid (c.f. \cite{Cromwell}).  Two adjacent columns (resp. rows) are said to be \emph{interleaved} if the $y$-coordinates (resp. $x$-coordinates) of the markings are distinct and alternate between the columns (resp. rows).  A commutation switches the position of any two adjacent rows (resp. columns) that are not interleaved, including pairs that are adjacent with respect to the boundary (c.f. \cite{Adam}).  Note that the grid may be thought of as existing on a torus, so opposite edges may be identified (hence the commutation move includes rows and columns adjacent to the boundaries).  Furthermore, using this definition of commutation, commutation and cyclic permutation commute.

\begin{conjecture}
Let $K_{max}$ and $K_{min}$ be left hand Legendrian torus knots with $p \geq 5$ and maximal Thurston-Bennequin number such that the rotation number of $K_{max}$ is maximal and the rotation number of $K_{min}$ is minimal.  Then $c_\ell(K_{max}) < c_\ell(K_{min})$.
\end{conjecture}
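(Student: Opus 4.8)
The plan is to assume Conjecture \ref{gridMoves} and to establish the two statements $c_\ell(K_{max}) = \alpha(T_{p,2})$ and $c_\ell(K_{min}) > \alpha(T_{p,2})$, which together give the desired strict inequality. One bound is free: the $(x,y)$-projection of a cube diagram of size $n$ is a grid diagram of size $n$, and no grid diagram of a knot of type $T_{p,2}$ is smaller than $\alpha(T_{p,2})$, so $c_\ell(K) \geq \alpha(T_{p,2})$ for every Legendrian representative $K$. Everything therefore reduces to producing a single cube lift for $K_{max}$ and ruling out every cube lift for $K_{min}$, exactly as in the proof of Theorem \ref{thm:legendrian} for $p = 5$. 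By the rotation-number computation $r(G_{j,k}) = k - j$ recorded before Figure \ref{fig:standardp2}, the relevant standard diagrams are $G_{1,p-1}$ for $K_{max}$ (maximal $r = p-2$) and $G_{p-1,1}$ for $K_{min}$ (minimal $r = -(p-2)$).

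For the upper bound I would lift $G_{1,p-1}$ directly. The lefthand trefoil cube diagram of Figure \ref{fig:LHTrefoil} is the $p = 3$ instance of a pattern that propagates along the diagonal of the standard $(p,2)$ diagram: each crossing is lifted with its $z$-cube bend one unit higher than the crossing below it, so the crossings induce a total order on the $X$-bends and the $(y,z)$- and $(z,x)$-projections inherit the conventions of Figure \ref{fig:crossings}. I would then check that $G_{1,p-1}$ contains neither a Type 1 nor a Type 2 configuration: with $j = 1$ the dominant orientation of the diagonal $X$-bends opens every shaded region toward the boundary, so no shaded region can trap an obstructing $O$-marking, and neither obstruction theorem applies. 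This yields a genuine cube diagram of size $\alpha(T_{p,2})$, so $c_\ell(K_{max}) = \alpha(T_{p,2})$.

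For the lower bound I would show that no minimal grid diagram representing $K_{min}$ lifts. By Conjecture \ref{gridMoves} every such diagram is reachable from $G_{p-1,1}$ by commutations and cyclic permutations, so it suffices to exhibit a Type 1 configuration that no sequence of these moves can destroy. I would first locate one in $G_{p-1,1}$ itself: because $j = p-1 \geq 4$ now forces a long run of like-oriented $X$-bends down the diagonal, the topmost such bend has a shaded region opening toward the opposite diagonal, where an $O$-marking is unavoidable, and its vertical edge violates the $(z,x)$ crossing condition precisely as in the proof that Type 1 configurations cannot appear in a cube projection. To propagate this, I would verify that adjacent rows and adjacent columns of the diagonal diagram $G_{p-1,1}$ are interleaved, so that no commutation is available; since commutation and cyclic permutation commute, the full orbit then collapses to the cyclic-permutation orbit, a finite family of exactly the kind catalogued as the $49$ diagrams in the $p = 5$ case. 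A cyclic permutation only rotates the torus, permuting the diagonal run of obstructing bends without altering any single bend's local shaded structure, so at least one Type 1 configuration always survives and $c_\ell(K_{min}) > \alpha(T_{p,2})$.

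The hard part is this persistence claim, and within it the status of commutations. If the standard diagram for $K_{min}$ does admit a commutation, that is, if some adjacent pair fails to be interleaved, then a single move could slide the blocking $O$-marking out of the shaded region, and the orbit would be strictly larger than the cyclic one. In that case the right tool is a commutation-invariant quantity governed by the minimal rotation number $r = -(p-2)$, which pins down the number of like-oriented diagonal bends; one would then argue that this quantity remains large enough across the entire orbit to force a Type 1 configuration in every diagram. Proving that such a count is genuinely unchanged by commutation, which can reorder bends across a shaded region, is where the real difficulty lies, and is precisely the step that the computer verification discharges for $p = 5$ and $p = 7$.
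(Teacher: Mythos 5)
Your overall architecture matches the paper's: establish $c_\ell(K_{max}) = \alpha(T_{p,2})$ by extending the trefoil cube diagram of Figure \ref{fig:LHTrefoil} along the diagonal of the standard diagram, then use Conjecture \ref{gridMoves} together with the commuting of commutation and cyclic permutation to reduce the lower bound for $K_{min}$ to showing that a Type 1 configuration survives every sequence of cyclic permutations applied to the standard diagram (the paper, like you, relies on the standard diagram admitting no commutation moves at all, so your worry about that case is unnecessary --- for the standard diagonal diagram all adjacent rows and columns are interleaved). However, your key persistence step contains a genuine error. You claim that ``a cyclic permutation only rotates the torus, permuting the diagonal run of obstructing bends without altering any single bend's local shaded structure, so at least one Type 1 configuration always survives.'' Type 1 configurations are \emph{not} toroidal or local objects: by the paper's definition the shaded region of a Type 1 configuration extends from the $X$-bend \emph{to the boundary of the grid diagram}, and the dotted edges must stay in that region. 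A cyclic permutation moves a boundary row or column to the opposite side of the grid, which changes every bend's position relative to the boundary and can carry the obstructing $O$-marking (or part of the connecting path) out of the shaded region, destroying that particular configuration. So no single configuration persists, and your blanket claim fails.

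The paper's repair, which your proposal is missing, is quantitative: normalize any sequence of cyclic permutations into $a$ downward moves followed by $b$ leftward moves, and track \emph{three} distinct Type 1 configurations $X_1$, $X_2$, $X_3$ in the standard diagram (Figure \ref{fig:lhp2}). Then a case analysis on $(a,b)$ shows that for every choice at least one of the three survives: $X_1$ if $a \leq n-4$ and $b \leq n-5$; $X_2$ if $a \geq 5$ and $b \geq 4$; and $X_3$ in the remaining corner cases ($a > n-4$, $b < 4$, or $a < 5$, $b > n-5$); the hypothesis $p \geq 9$ guarantees these ranges cover all possibilities. Your fallback idea --- a commutation-invariant count governed by the rotation number --- is not needed and is left entirely undeveloped; the actual difficulty in the theorem is exactly the case analysis you skipped, not commutation invariance.
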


\begin{proof}[Proof (Assuming conjecture \ref{gridMoves}).]
As in the proof of Theorem \ref{thm:legendrian} the cube diagram shown in Figure \ref{fig:LHTrefoil} extends to give a cube diagram for $K_{max}$ of size equal to the arc index.  Also for $p = 5$ this result is precisely Theorem \ref{thm:legendrian}.  A similar argument shows that the result holds for $p = 7$ as well.  Assume now that $p \geq 9$.  Since cyclic permutation commutes with commutation, all commutation moves may be moved to the beginning of the sequence. Since the standard grid diagram for $K_{min}$ has no possible commutation moves, it suffices to consider only cyclic permutations.  Consider a sequence of cyclic permutation moves.  Such a sequence is equivalent one in which the first $a$ moves are cyclic permutations down (i.e. moving the bottom row to the top of the grid) and the remaining moves, denote the number of moves by $b$, are cyclic permutations to the left (i.e. moving the left column to the right side of the grid).  Also, note that doing $a$ cyclic permutations down is equivalent to doing $n - a$ cyclic permutations up.  Similarly, doing $b$ cyclic permutations left is equivalent to doing $n - b$ cyclic permutations right.  

\begin{center}
\begin{figure}[h]
\centering
\includegraphics[scale=.5]{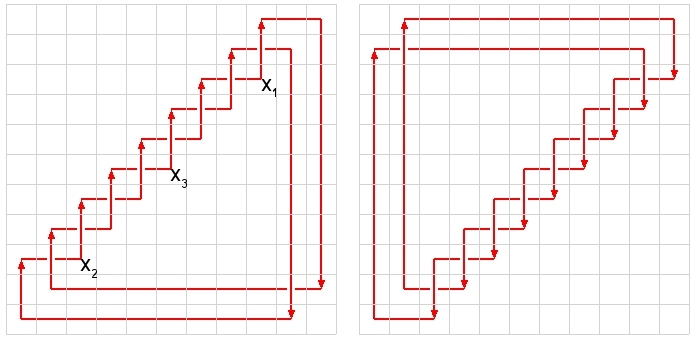}
\caption{Standard diagrams for the $(p,2)$ torus knot with $r = -(p - 2)$ and $r = p - 2$ respectively.}
\label{fig:lhp2}
\end{figure}
\end{center}

Now, if $a \leq n - 4$ and $b \leq n - 5$ the upper right most Type 1 configuration (labeled $X_1$ in Figure \ref{fig:lhp2}) will remain fixed.  If $a \geq 5$ and $b \geq 4$ then the lower left most Type 1 configuration (labeled $X_2$ in Figure \ref{fig:lhp2}) remains.  If $a > n - 4$ and $b < 4$, or if $a < 5$ and $b > n - 5$ then the Type 1 configuration labeled $X_3$ remains.


\end{proof}

Note that while chirality is detected in all examples computed so far, it is not true that cube number always detects legendrian knot type.  There are multiple examples of left hand $(p,2)$ torus knots that are of different Legendrian types yet still have cube number equal to arc index.  For example, when $p = 7$ all left hand Legendrian $(p,2)$ torus knots realizable in a minimal grid diagram have cube number equal to arc index except for those with rotation number equal to $-3$ and $-5$.


\begin{thebibliography}{99}

\bibitem{arcindex} Y. Bae, C. Park.  An upper bound of arc index of links. \emph{Mathematical Proceedings of the Cambridge Philosophical Society}. Volume 129, Issue 3, Nov 2000, pp. 491-500.

\bibitem{Adam} S. Baldridge, A. Lowrance.  Cube diagrams and a homology theory for knots. arXiv:0811.0225v1.

\bibitem{BaldMcCar} S. Baldridge, B. McCarty.  Small Examples of Cube Diagrams of Knots.\emph{Topology Proceedings.} 36 (2010) pp. 213-228.

\bibitem{BLcode} S. Baldridge, A. Lowrance. Cube Knot Calculator, http://cubeknots.googlecode.com.

\bibitem{knotinfo} J. C. Cha, C. Livingston.  KnotInfo: Table of Knot Invariants, http://www.indiana.edu/~knotinfo, July 10, 2009.

\bibitem{Cromwell} P. Cromwell. Embedding knots and links in an open book. I. Basic properties. {\em Topology Appl.}, 64 (1995), no. 1, pp. 37-58.

\bibitem{Gridlink} M. Culler.  Gridlink. http://www.math.uic.edu/~culler/gridlink/.

\bibitem{etnyrehonda} J.B. Etnyre, K. Honda.  Knots and Contact Geometry I:  Torus knots and the Figure Eight Knot.  \emph{Journal of Symplectic Geometry.} Volume 1, Number 1, (2001), pp. 63-120.





\bibitem{Ng} L. Ng.  On arc index and maximal Thurston-Bennequin number. arXiv:math/0612356v3.

\bibitem{legend} P. Ozvath, Z. Szabo, D. Thurston.  Legendrian knots, transverse knots and combinatorial Floer homology.    arXiv:math/0611841v2.


\end{thebibliography}
\end{document}